\documentclass[11pt,a4paper]{article}
\usepackage{times}
\usepackage[T1]{fontenc}
\usepackage[utf8]{inputenc}
\usepackage{graphicx}
\usepackage{amsmath}
\usepackage{amsthm}
\usepackage{amsfonts}
\usepackage{caption}
\usepackage{float}
\usepackage{amssymb}
\usepackage{algorithm}
\usepackage{algpseudocode}
\usepackage{tikz}
\usepackage{pgfplots}
\usepackage{url}
\pgfplotsset{compat=1.18}
\newcommand{\ZZ}{Z\kern-.3em\raise-0.5ex\hbox{Z}}
\newcommand{\N}{\mathbb{N}}

\newcommand{\R}{\mathbb{R}}

\newcommand{\ra}{\right\rangle}
\newcommand{\la}{\left\langle}

\renewcommand{\H}{\mathcal{H}}

\newtheorem{assumption}{Assumption}[section]
\newtheorem{definition}[assumption]{Definition}
\newtheorem{proposition}[assumption]{Proposition}
\newtheorem{theorem}[assumption]{Theorem}
\newtheorem{lemma}[assumption]{Lemma}

\newenvironment{acknowledgement}
  {\section*{Acknowledgements}}
  {}
  
\title{On Symmetric Kernel Collocation for Nonlinear PDEs}

\author{
Milan Bacchetta\textsuperscript{*,1},
Tobias Ehring\textsuperscript{2},
and Bernard Haasdonk\textsuperscript{2}
\\[0.5em]
{\small
\textsuperscript{1}Mathematical Optimization and Data Science Group, Saarland University, Germany}
\\
\small{\textsuperscript{2}Institute of Applied Mathematics and Numerical Simulation, University of Stuttgart, Germany}
}

\date{\today}

\usepackage[backend=biber,url=true,doi=false,isbn=false,eprint=false,maxnames=99,]{biblatex}
\bibliography{non_lin_ker}
\begin{document}
\maketitle    

\begingroup
\renewcommand{\thefootnote}{*}
\footnotetext{Corresponding author: e-mail \texttt{milan.bacchetta@math.uni-sb.de}}
\endgroup

\begin{abstract}
This paper considers kernel-based approximation methods for nonlinear partial differential equations. To this end, the problem is formulated as an optimal-recovery generalized interpolation problem, that is, as an optimization problem in an RKHS with nonlinear functional constraints. This formulation provides the basis for a convergence analysis carried out directly in the RKHS and extends existing results by relaxing the uniqueness assumption on the PDE solution. In the nonunique case, the limiting object is characterized as a minimum-norm solution. Furthermore, a residual-greedy strategy for adaptive collocation point selection is proposed, and convergence of the resulting sequence of generalized interpolants is established. Numerical experiments for a stationary nonlinear heat equation illustrate the method and indicate that residual-greedy point selection can lead to markedly smaller PDE residuals than point sets selected according to fill-distance criteria.
\end{abstract}

\section{Introduction}

Symmetric kernel collocation is a meshless approach for approximating solutions of partial differential equations by functions from the native space of a positive definite kernel. In the linear case, the collocation equations lead to a linear generalized interpolation problem. For nonlinear equations this interpretation is no longer available directly, since imposing the equation at finitely many points leads to nonlinear conditions on the unknown function. To describe the class of nonlinear problems considered here, we separate the linear differential quantities from the nonlinear dependence on them. We consider boundary value problems of the form
\begin{align*}
\mathcal P(u)(x) &= f(x), \qquad x\in\Omega,\\
\mathcal B(u)(x) &= g(x), \qquad x\in\partial\Omega,
\end{align*}
where
\begin{align*}
\mathcal P(u)(x)
&=
\overline{P}\bigl(L_1^P u(x),\ldots,L_Q^P u(x)\bigr),\\
\mathcal B(u)(x)
&=
\overline{B}\bigl(L_1^B u(x),\ldots,L_R^B u(x)\bigr).
\end{align*}
Here, $\overline{P}$ and $\overline{B}$ are continuous, possibly nonlinear functions, while $L_i^P$ and $L_j^B$ are linear differential operators. This formulation keeps
the differential operations linear, while allowing the equations and boundary
conditions to depend nonlinearly on the resulting quantities. The precise
assumptions are stated below.

Linear symmetric kernel collocation can be interpreted as an optimal recovery
problem in the native space of the kernel. In this setting, the approximation
is sought among native-space functions satisfying the differential equation
and the boundary conditions at prescribed collocation points. For linear
differential and boundary operators, these conditions are linear functional
constraints. The nonlinear formulation studied in this work extends this
principle by admitting nonlinear pointwise constraints generated by the
operators $\mathcal P$ and $\mathcal B$. A natural consistency question is
whether the resulting collocation approximants converge to solutions of the
boundary value problem as the point sets are refined. We address this question
in two regimes: first, for sequences of collocation points with vanishing fill
distance, and second, for a novel residual-based, target-dependent greedy
strategy for selecting collocation points. In both regimes, convergence is
established in the native space of the kernel. Finally, numerical experiments for a stationary nonlinear heat-conduction problem
illustrate the proposed greedy strategy.

The analysis builds on reproducing kernel Hilbert spaces, kernel collocation for PDEs, optimal recovery, and greedy point selection for kernel methods. The RKHS perspective goes back to the foundational work of Aronszajn \cite{bib12}, while kernel collocation for PDEs using radial basis functions was studied, for instance, in \cite{bib13}. The nonlinear collocation framework considered here is based on the Gaussian-process formulation for nonlinear PDEs in \cite{bib1}; here we use the corresponding deterministic RKHS viewpoint. For nonlinear kernel collocation, \cite{bib2} obtained error estimates for families of point sets with vanishing fill distance, which is the refinement regime used in our first convergence result. Greedy point selection for kernel approximation has a long history, including adaptive greedy techniques for large RBF systems \cite{bib8}, data-independent point selection by geometric greedy methods \cite{bib9}, and convergence rates for the $P$-greedy algorithm \cite{bib14}. Related greedy ideas were later transferred to linear kernel collocation for PDEs in \cite{bib3,bib15}, and even to the parametric setting in \cite{bib6}. In contrast, the residual-greedy method considered here leads at each step to a nonlinear minimum-norm problem, so the existing linear greedy analysis does not apply directly. Nonlinear minimum-norm recovery problems in RKHSs also arise in optimal control; see \cite{bib5}.

\section{Preliminaries}
We first recall some basic facts on reproducing kernel Hilbert spaces and kernel collocation, following the structure of \cite{bib4}. Let $\Omega\subset\R^d$ be a bounded open set with closure $\overline{\Omega}$ and boundary $\partial \Omega$. A symmetric function $k:\overline \Omega\times \overline\Omega\to\R$ is called a positive definite kernel if, for any $x_1,\ldots,x_n\in\overline\Omega$, $n\in \N$, the associated kernel matrix
\begin{align*}
    K=(k(x_i,x_j))_{i,j=1}^n\in\R^{n\times n}
\end{align*}
is positive semi-definite. Every positive definite kernel $k$ corresponds to a unique reproducing kernel Hilbert space (RKHS), denoted by $\H_k(\overline\Omega)$. This is a Hilbert space with inner product $\la\cdot,\cdot\ra_{\H_k(\overline\Omega)}$ consisting of functions $f:\overline\Omega\to\R$ such that
\begin{itemize}
    \item for every $x\in\overline\Omega$, one has $k(\cdot,x)\in \H_k(\overline\Omega)$;
    \item for every $f\in \H_k(\overline\Omega)$, the reproducing property holds, namely
    \begin{align*}
        \la k(\cdot,x),f\ra_{\H_k(\overline\Omega)}=f(x).
    \end{align*}
\end{itemize}
A practical class of kernels is given by Sobolev kernels, among which Wendland kernels are a common example \cite{bib10}. For suitable choices of their smoothness parameters, these kernels generate RKHSs that are norm-equivalent to Sobolev spaces of integer order, with smoothness index greater than $d/2$. For our numerical experiments, we use the Gaussian kernel
\begin{equation}\label{eq:def_gaussian}
    k_\gamma (x,y):=\exp(-\gamma \|x-y\|_2^2),
    \qquad x,y\in \overline{\Omega},\quad \gamma>0.
\end{equation} 
The Gaussian kernel is widely used in practice due to its high accuracy and smoothness properties. Its native RKHS is not equivalent to a Sobolev space of finite order; rather, it consists of very smooth, in fact analytic, functions. For further details, see \cite[Sec.~4.4]{bib11}. If $k\in C^{2\ell}(\overline\Omega\times\overline\Omega)$ for some $\ell\in\N$, then the reproducing property extends to derivatives. In particular, $\H_k(\overline\Omega)\subset C^\ell(\overline\Omega)$ and, for every multi-index $\alpha\in\N_0^d$ with $|\alpha|\leq \ell$, one has
\begin{align*}
    \la \partial_2^\alpha k(\cdot,x),v\ra_{\H_k(\overline\Omega)}
    =\partial^\alpha v(x),
    \qquad v\in \H_k(\overline\Omega),\quad x\in\overline\Omega.
\end{align*}
Here, $\partial_2^\alpha$ denotes differentiation with respect to the second argument of $k$, and $\partial_1^\alpha$ is used analogously.

As a preliminary construction, we recall how the derivative-reproducing property leads to kernel collocation for linear PDEs, also known as generalized interpolation. Consider a linear PDE for an unknown $u\in C^\ell(\overline{\Omega})$ of the form
\begin{align*}
    L^P(u)(x)&=f(x),\quad x\in\Omega,\\
    L^B(u)(x)&=g(x),\quad x\in \partial\Omega.
\end{align*}
Here, $L^P$ and $L^B$ are prescribed linear differential operators of order at most $\ell$, that is, linear combinations of partial derivatives up to order $\ell$. The functions $f:\Omega\to\R$ and $g:\partial\Omega\to\R$ denote the given right-hand side and boundary data, respectively.

The collocation approach enforces the differential equations at selected points in the domain and on the boundary, and seeks a minimum-norm interpolant $u^\dagger$ that solves
\begin{equation}
\label{eq:lin_opt_recov}
\begin{aligned}
    \min_{u\in\mathcal{H}_k(\overline{\Omega})}
    \quad & \|u\|_{\mathcal{H}_k(\overline{\Omega})}^2 \\
    \text{subject to}\quad
    &L^P(u)(x_i)
    = f(x_i),
    && i\in [M], \\
    &L^B(u)(x_j')
    = g(x_j'),
    && j\in [M'] .
\end{aligned}
\end{equation}
Here and throughout, we write $[N]:=\{1,\ldots,N\}$ for $N\in\N$. Such a $u^\dagger$ can be explicitly computed. For a linear differential operator $L$ of order at most $\ell$ and a point $\bar{x}\in\overline{\Omega}$, we define the continuous functional $\delta_{\bar{x}}^L:\H_k(\overline{\Omega})\to\R$ by $\delta_{\bar{x}}^L(u):=L(u)(\bar{x})$. Let $\mathbf{X}=\{\delta_1,\ldots,\delta_n\}\subset \H_k(\overline{\Omega})'$ be a collection of such functionals, and let $\gamma_1,\ldots,\gamma_n\in\H_k(\overline{\Omega})$ denote their Riesz representers. We define the corresponding collocation matrix by
\begin{align*}
    K_{\mathbf{X}}:=\left(\delta_i(\gamma_j)\right)_{i,j=1}^n.
\end{align*}
For the PDE above, the relevant functionals are $\mathbf{X}=\{\delta_{x_1}^{L^P},\ldots,\delta_{x_{M}}^{L^P},\delta_{x_1'}^{L^B},\ldots,\delta_{x_{M'}'}^{L^B}\}$, where $x_1,\ldots,x_{M}\in\Omega$ and $x_1',\ldots,x_{M'}'\in\partial\Omega$ are interior and boundary collocation points, respectively. If the linear system
\begin{align*}
    K_{\mathbf{X}}\big((\alpha_i^P)_{i=1}^M,(\alpha^B_j)_{j=1}^{M'}\big)^T
    =
    \left(
    (f(x_i))_{i=1}^{M},
    (g(x_j'))_{j=1}^{M'}
    \right)^T
\end{align*}
admits a solution $\alpha\in\R^{M+M'}$, then the corresponding collocation approximant is given by
\begin{align*}
    u^\dagger
    =
    \sum_{i=1}^{M}\alpha_i^P\gamma_{x_i}^{L^P}
    +
    \sum_{i=1}^{M'}
    \alpha_{i}^B\gamma_{x_i'}^{L^B}.
\end{align*}
By construction, $u^\dagger$ satisfies the PDE at the prescribed collocation points. In concrete terms, both the entries of $K_{\mathbf{X}}$ and the Riesz representers are obtained by applying the corresponding differential operators to the appropriate arguments of the kernel. Here only, we use the notation $L_{(1)}$ and $L_{(2)}$ to indicate that the operator $L$ is applied to the first and second argument of the kernel, respectively. Thus one has $\gamma_{x_1}^{L^P}=L^P_{(2)} k(\cdot,x_1)$ and
\begin{align*}
    \delta_{x_1'}^{L^B}(\gamma_{x_1}^{L^P})
    =
    L_{(1)}^B L_{(2)}^P k(x_1',x_1).
\end{align*}
Expressions for these derivatives of the kernel can often be derived analytically beforehand, which allows for fast assembly of the associated linear system. For proofs and further details concerning symmetric kernel collocation for PDEs, we refer to \cite[Ch.~16]{bib4}.

\section{The nonlinear collocation scheme}

Linear symmetric kernel collocation can be interpreted as the construction of a minimum-norm generalized interpolant, as described in \eqref{eq:lin_opt_recov}. Following \cite{bib1}, this optimal-recovery perspective can be extended to a broad class of nonlinear PDEs. In this section, we introduce this class of problems and formulate the corresponding generalized interpolation problem. We seek a solution $u\in C^\ell(\overline{\Omega})$ such that all boundary differential operators appearing below are well defined on $\partial\Omega$, satisfying
\begin{equation}
\label{eq:pde}
\begin{split}
    \overline{P}\bigl(L_1^P u(x),\ldots,L_Q^P u(x)\bigr)
    &= f(x), \qquad x\in\Omega,\\
    \overline{B}\bigl(L_1^B u(x),\ldots,L_R^B u(x)\bigr)
    &= g(x), \qquad x\in\partial\Omega .
\end{split}
\end{equation}
Here, $\overline{P}:\R^Q\to\R$ and $\overline{B}:\R^R\to\R$ are continuous functions. The operators $L_i^P$, $i\in[Q]$, and $L_j^B$, $j\in[R]$, are linear differential operators of order at most $\ell\in\N$. The functions $f:\Omega\to\R$ and $g:\partial\Omega\to\R$ denote the given right-hand side and boundary data. We now choose a kernel $k\in C^{2\ell}(\overline{\Omega}\times\overline{\Omega})$ such that all relevant linear differential operators are well defined on $\H_k(\overline{\Omega})$. Following \cite{bib1}, we choose collocation points $x_1,\ldots,x_M\in\Omega$ and $x_1',\ldots,x_{M'}'\in\partial\Omega$, as in the linear setting. We then seek a function $u\in\H_k(\overline{\Omega})$ that satisfies the PDE and boundary conditions at the prescribed collocation points and has minimal RKHS norm. That is, we aim to solve
\begin{equation}
\label{eq:opt_recov}
\begin{aligned}
    \min_{u\in\H_k(\overline{\Omega})}
    \quad & \|u\|_{\H_k(\overline{\Omega})}^2 \\
    \text{subject to}\quad
    &\overline{P}\bigl(L_1^P u(x_i),\ldots,L_Q^P u(x_i)\bigr)
    = f(x_i),
    && i\in [M], \\
    &\overline{B}\bigl(L_1^B u(x_j'),\ldots,L_R^B u(x_j')\bigr)
    = g(x_j'),
    && j\in [M'] .
\end{aligned}
\end{equation}
To proceed, we will frequently refer to the following assumption, which collects the conditions needed both to ensure the existence of solutions to \eqref{eq:opt_recov} and to establish the convergence results below.
\begin{assumption}[Admissible collocation setting]
\label{ass:collocation-functionals}
Let the PDE \eqref{eq:pde} and a p.\,d.\ kernel
$k\in C^{2\ell}(\overline{\Omega}\times\overline{\Omega})$ be given.
Suppose that $\overline{\Omega}$ is compact, that \eqref{eq:pde} admits at
least one solution in $\H_k(\overline{\Omega})$, and that
$L_1^P,\ldots,L_Q^P$ and $L_1^B,\ldots,L_R^B$ are linear differential
operators of order at most $\ell$. For a given set of collocation points $X:=\{x_1,\ldots,x_M\}\cup\{x_1',\ldots,x_{M'}'\}$, where $x_1,\ldots,x_M\in\Omega$ and $x_1',\ldots,x_{M'}'\in\partial\Omega$, define
\begin{align*}
    \mathbf X(X)
    :=
    \Bigl\{
        \delta_{x_i}^{L_q^P}
        : i\in[M],\ q\in[Q]
    \Bigr\}
    \cup
    \Bigl\{
        \delta_{x_j'}^{L_r^B}
        : j\in[M'],\ r\in[R]
    \Bigr\}.
\end{align*}
The functionals in $\mathbf X(X)$ are required to be linearly independent.
\end{assumption}
Note that this assumption implies that $L_i^P:\H_k(\overline{\Omega})\to C(\overline{\Omega})$ for $i\in[Q]$ and $L_j^B:\H_k(\overline{\Omega})\to C(\Omega)$ for $j\in[R]$ are continuous; see \cite[Cor.~4.36]{bib11}.

Although \eqref{eq:opt_recov} is an infinite-dimensional constrained optimization problem, it can be reduced to a finite-dimensional problem restricted to the search to in the span of the Riesz representers associated with the collocation functionals in $\mathbf X(X)$. To this end, let $N:=M Q+M'R$, and define
\begin{equation*}
\begin{split}
    F : \R^N &\to \R^{M+M'}, \\
    (F(z))_m
    &:=
    \begin{cases}
        \overline{P}\big(
            z_{(m-1)Q+1},\, z_{(m-1)Q+2},\ldots,\, z_{mQ}
        \big),
        & m\in[M],\\[1ex]
        \overline{B}\big(
            z_{M Q+(j-1)R+1},\ldots,\,
            z_{M Q+jR}
        \big),
        & m=M+j,\ j\in[M'],
    \end{cases}
    \\[1ex]
    y_m
    &:=
    \begin{cases}
        f(x_m),
        & m\in[M],\\[0.5ex]
        g(x'_j),
        & m=M+j,\ j\in[M'].
    \end{cases}
\end{split}
\end{equation*}

\begin{proposition}[Adapted from {\cite[Thm.~3.6]{bib5}}]
\label{prop:finite_reduction}
Let $X=\{x_1,\ldots,x_M\}\cup\{x_1',\ldots,x_{M'}'\}$, with $x_1,\ldots,x_M\in\Omega$ and $x_1',\ldots,x_{M'}'\in\partial\Omega$, be a set of collocation points satisfying Assumption~\ref{ass:collocation-functionals}, and write $\mathbf X:=\mathbf X(X)$. Denote the Riesz representers of the corresponding collocation functionals by
\begin{align*}
    \gamma_{i,q}^P:=\bigl(\delta_{x_i}^{L_q^P}\bigr)^*,\quad i\in[M],\ q\in[Q],
    \qquad
    \gamma_{j,r}^B:=\bigl(\delta_{x_j'}^{L_r^B}\bigr)^*,\quad j\in[M'],\ r\in[R].
\end{align*}
Then $K_{\mathbf X}$ is invertible and the problem \eqref{eq:opt_recov} attains a minimum. Moreover, the square of the minimal norm is given by the optimal value of the finite-dimensional problem
\begin{equation}
\label{eq:quadratic_constrained}
\begin{aligned}
    \min_{z\in \R^N} \quad & z^\top K_{\mathbf X}^{-1} z \\
    \text{subject to} \quad & F(z)=y .
\end{aligned}
\end{equation}
If $z^\ast\in\R^N$ is a solution of \eqref{eq:quadratic_constrained} and $\alpha^\ast:=K_{\mathbf X}^{-1}z^\ast$, then the corresponding minimizer of \eqref{eq:opt_recov} is given by
\begin{align*}
    u^\ast
    &=
    \sum_{i=1}^{M}\sum_{q=1}^{Q}
    \alpha^\ast_{(i-1)Q+q}\,
    \gamma_{i,q}^P  +
    \sum_{j=1}^{M'}\sum_{r=1}^{R}
    \alpha^\ast_{MQ+(j-1)R+r}\,
    \gamma_{j,r}^B .
\end{align*}
\end{proposition}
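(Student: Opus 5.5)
The plan is to split $\H_k(\overline{\Omega})$ orthogonally into $V:=\operatorname{span}\{\gamma_{i,q}^P,\gamma_{j,r}^B\}$ and its complement, and to show that the constraints of \eqref{eq:opt_recov} only see the component in $V$. First I will prove invertibility of $K_{\mathbf X}$. Write $\gamma_1,\ldots,\gamma_N$ for the representers in the ordering used to define $F$, so that $(K_{\mathbf X})_{ij}=\delta_i(\gamma_j)=\la\gamma_i,\gamma_j\ra_{\H_k(\overline\Omega)}$. Thus $K_{\mathbf X}$ is a Gram matrix, hence symmetric positive semidefinite. If $\alpha^\top K_{\mathbf X}\alpha=0$, then $\|\sum_j\alpha_j\gamma_j\|^2=0$, so $\sum_j\alpha_j\delta_j=0$ as a functional, since the Riesz map is an isometric isomorphism. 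Linear independence from Assumption~\ref{ass:collocation-functionals} then forces $\alpha=0$, so $K_{\mathbf X}$ is positive definite.

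Next I introduce the linear map $T:\H_k(\overline\Omega)\to\R^N$, $T(u):=(\delta_i(u))_{i=1}^N$. The constraints of \eqref{eq:opt_recov} are exactly $F(T u)=y$. For $u\in\H_k(\overline\Omega)$, write $u=v+w$ with $v\in V$ and $w\in V^\perp$. Then $T w=0$, so $T u=T v$, and $\|u\|^2=\|v\|^2+\|w\|^2\ge\|v\|^2$. Hence the infimum of \eqref{eq:opt_recov} equals the infimum over $u\in V$. For $v=\sum_j\alpha_j\gamma_j$ we have $T v=K_{\mathbf X}\alpha$ and $\|v\|^2=\alpha^\top K_{\mathbf X}\alpha$. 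With the substitution $z=K_{\mathbf X}\alpha$, which is a bijection by invertibility, we get $\|v\|^2=z^\top K_{\mathbf X}^{-1}z$ and the constraint becomes $F(z)=y$. This shows that the two optimal values coincide. It also shows that if $z^\ast$ solves \eqref{eq:quadratic_constrained}, then $u^\ast=\sum_j(K_{\mathbf X}^{-1}z^\ast)_j\gamma_j$ is feasible and attains the infimum, so it is a minimizer. This $u^\ast$ is exactly the stated expansion.

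The remaining point, which I expect to be the main obstacle, is existence of a minimizer of \eqref{eq:quadratic_constrained}. Here I use two facts.

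\textbf{Nonemptiness.} By Assumption~\ref{ass:collocation-functionals}, the PDE has a solution $\bar u\in\H_k(\overline\Omega)$. Then $\bar z:=T\bar u$ satisfies $F(\bar z)=y$, because $\bar u$ satisfies \eqref{eq:pde} pointwise at the collocation points.

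\textbf{Compactness.} The feasible set $F^{-1}(\{y\})$ is closed, since $F$ is continuous ($\overline P$ and $\overline B$ are continuous). Intersecting it with the sublevel set $\{z: z^\top K_{\mathbf X}^{-1}z\le \bar z^\top K_{\mathbf X}^{-1}\bar z\}$ gives a nonempty set. This sublevel set is compact because $K_{\mathbf X}^{-1}$ is positive definite. A continuous function attains its minimum on a nonempty compact set, which gives a minimizer $z^\ast$.

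Combining these two facts with the reduction in the previous step finishes the proof. Nothing beyond Riesz representation and finite-dimensional compactness is needed. The only care required is bookkeeping the index ordering so that $T$, $F$ and $K_{\mathbf X}$ use the same enumeration of $\mathbf X$.
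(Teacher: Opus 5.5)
Your proof is correct and complete. The paper gives no proof of this proposition and simply defers to \cite[Thm.~3.6]{bib5}, so there is no in-paper argument to compare against. Your argument is the standard representer-type reduction that such results rest on: positive definiteness of the Gram matrix $K_{\mathbf X}$ from linear independence, the orthogonal splitting $\H_k(\overline{\Omega})=V\oplus V^\perp$ showing that the constraints only see the $V$-component, the change of variables $z=K_{\mathbf X}\alpha$, and existence via compactness of the sublevel set of $z\mapsto z^\top K_{\mathbf X}^{-1}z$ intersected with the closed feasible set $F^{-1}(\{y\})$. What your self-contained version buys over the citation is that it shows exactly where each part of Assumption~\ref{ass:collocation-functionals} enters. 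Linear independence gives invertibility, existence of a PDE solution gives feasibility, and continuity of $\overline P,\overline B$ gives closedness. If you want the full correspondence suggested by the phrase ``the corresponding minimizer'', you can add one line: every minimizer of \eqref{eq:opt_recov} lies in $V$, since dropping a nonzero $V^\perp$-component preserves feasibility and strictly decreases the norm. Hence all minimizers arise from solutions of \eqref{eq:quadratic_constrained} in the stated way.
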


\section{Convergence analysis of the optimal recovery scheme}

This section studies how the optimal recovery method behaves as the number of collocation points increases. We first prove an abstract convergence result showing that convergence of the PDE residuals provides a criterion for convergence of a sequence of generalized interpolants to the set of PDE solutions. Although this result is stated as a proposition, it should be viewed as one of the main analytical tools of the paper and contains a technically substantial part of the convergence analysis. We then consider two concrete settings: convergence under a diminishing fill distance condition, and convergence for a residual-greedy, target-dependent collocation point selection method. Throughout the section, we study a sequence of collocation point sets $X_n\subset\overline{\Omega}$ and the corresponding minimum-norm generalized interpolants $u_n\in\H_k(\overline{\Omega})$. The goal is to prove strong convergence directly in the RKHS, in contrast to \cite{bib1}, where compact embedding into a weaker space is assumed and convergence is established in the corresponding weaker topology.

If the PDE does not have a unique solution in the RKHS, convergence can still be established toward the set of minimum-norm RKHS solutions. To this end, we define the set of minimum-norm solutions by
\begin{align*}
    S_k
    :=
    \operatorname*{argmin}_{\substack{
        u\in\H_k(\overline{\Omega}),\\
        u \text{ solves } \eqref{eq:pde}
    }}
    \|u\|_{\H_k(\overline{\Omega})}.
\end{align*}
The dependence on $k$ is emphasized because the choice of kernel determines the solution space and, hence, the solutions selected by the minimum-norm criterion. Moreover, since Assumption~\ref{ass:collocation-functionals} ensures the existence of a solution to \eqref{eq:pde} in $\H_k(\overline{\Omega})$, Lemma~\ref{lem:min-norm-solutions-nonempty} in the appendix guarantees that $S_k\neq\emptyset$.
    
In this section, we will make use of the PDE and boundary residuals defined by
\begin{align*}
    R_P[u](x)
    &:=
    \left|
    \overline{P}\bigl(L_1^P u(x),\ldots,L_Q^P u(x)\bigr)
    - f(x)
    \right|,
    \qquad x\in\Omega,\\
    R_B[u](x)
    &:=
    \left|
    \overline{B}\bigl(L_1^B u(x),\ldots,L_R^B u(x)\bigr)
    - g(x)
    \right|,
    \qquad x\in\partial\Omega .
\end{align*}
We proceed in two steps. First, we prove the general statement that pointwise
convergence of these residuals along the sequence of optimal recovery
interpolants $(u_n)_{n\in\N}$, namely
\begin{align*}
    R_P[u_n](x)&\to 0
    \qquad\text{for all }x\in\Omega,\\
    R_B[u_n](x)&\to 0
    \qquad\text{for all }x\in\partial\Omega,
\end{align*}
is sufficient to obtain convergence of the interpolants to the set of PDE
solutions. Second, we establish this residual convergence for a sequence generated by specific collocation sets. The following proposition
provides the first step.

\begin{proposition}
\label{prop:abstract_convergence}
Let $(X_n)_{n\in\N}$ be a sequence of collocation point sets satisfying Assumption~\ref{ass:collocation-functionals}, where
\begin{align*}
    X_n
    &:=
    X_{\Omega,n}\cup X_{\partial\Omega,n},\\
    X_{\Omega,n}
    &:=
    \{x_1^n,\ldots,x_{M_n}^n\}\subset\Omega,\\
    X_{\partial\Omega,n}
    &:=
    \{x_1^{n\prime},\ldots,x_{M'_n}^{n\prime}\}
    \subset\partial\Omega .
\end{align*}
For each $n\in\N$, let $u_n\in\H_k(\overline{\Omega})$ be a minimizer of \eqref{eq:opt_recov} corresponding to the collocation set $X_n$. Assume that
\begin{align*}
    R_P[u_n](x)&\to 0
    \qquad\text{for all }x\in\Omega,\\
    R_B[u_n](x)&\to 0
    \qquad\text{for all }x\in\partial\Omega .
\end{align*}
Then the following statements hold:
\begin{enumerate}
    \item[\textnormal{a)}] The sequence $(u_n)_{n\in\N}$ converges to the set $S_k$ of minimum-norm solutions in the sense that
    \begin{align*}
        \operatorname{dist}_{\H_k(\overline{\Omega})}(u_n,S_k)\to 0.
    \end{align*}

    \item[\textnormal{b)}] If the minimum-norm solution of \eqref{eq:pde} is unique, say $u^\star$, then the full sequence converges strongly,
    \begin{align*}
        u_n\to u^\star
        \qquad\text{in }\H_k(\overline{\Omega}).
    \end{align*}
\end{enumerate}
\end{proposition}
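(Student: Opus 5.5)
The argument rests on one norm bound and a compactness argument via weak limits. Let $u^\star\in S_k$ be any minimum-norm solution; Lemma~\ref{lem:min-norm-solutions-nonempty} guarantees $S_k\neq\emptyset$. Since $u^\star$ satisfies \eqref{eq:pde} everywhere, it is feasible for \eqref{eq:opt_recov} with collocation set $X_n$ for every $n$. Minimality of $u_n$ then gives
\begin{align*}
\|u_n\|_{\H_k(\overline\Omega)}\le \|u^\star\|_{\H_k(\overline\Omega)}=:\rho \qquad\text{for all } n .
\end{align*}
Hence $(u_n)$ is bounded, and every subsequence has a further subsequence converging weakly to some $\bar u\in\H_k(\overline\Omega)$.

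Next I would show that every such weak limit $\bar u$ solves \eqref{eq:pde}. Fix $x\in\Omega$. Each functional $\delta_x^{L_q^P}$ is continuous on $\H_k(\overline\Omega)$, with Riesz representer given by the derivative-reproducing property. Weak convergence therefore gives $L_q^P u_{n}(x)=\langle (\delta_x^{L_q^P})^*,u_n\rangle\to L_q^P\bar u(x)$ along the subsequence, for each $q\in[Q]$. Since $\overline P$ is continuous, $\overline P(L_1^Pu_n(x),\ldots,L_Q^Pu_n(x))\to \overline P(L_1^P\bar u(x),\ldots,L_Q^P\bar u(x))$. The hypothesis $R_P[u_n](x)\to0$ then forces $\overline P(\ldots\bar u\ldots)(x)=f(x)$. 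The boundary points $x\in\partial\Omega$ are handled identically using $\overline B$ and $R_B$. So $\bar u$ is a solution of \eqref{eq:pde} lying in $\H_k(\overline\Omega)$.

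The conclusion then follows from weak lower semicontinuity. We have $\|\bar u\|\le\liminf\|u_n\|\le\limsup\|u_n\|\le\rho$. Because $\bar u$ is a solution, $\|\bar u\|\ge\rho$ by the definition of $S_k$. Hence $\bar u\in S_k$ and $\|u_n\|\to\|\bar u\|$ along the subsequence. In a Hilbert space, weak convergence together with convergence of norms gives strong convergence, since
\begin{align*}
\|u_n-\bar u\|^2=\|u_n\|^2-2\langle u_n,\bar u\rangle+\|\bar u\|^2\to 0 .
\end{align*}

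For part a), I would argue by contradiction. Suppose $\operatorname{dist}(u_{n},S_k)\ge\varepsilon>0$ along some subsequence. Extracting a weakly convergent sub-subsequence and applying the above yields $u_{n}\to\bar u\in S_k$ strongly along it, which contradicts the distance bound. Part b) is the same argument: every subsequence has a further subsequence converging strongly to the unique element $u^\star$, so the whole sequence converges to $u^\star$.

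The main obstacle I expect is the step passing the pointwise nonlinear residual condition to the weak limit, which requires knowing that evaluation of $L_q^Pu$ at a fixed point is a bounded functional. Assumption~\ref{ass:collocation-functionals} and $k\in C^{2\ell}$ provide exactly this, so only pointwise, not uniform, convergence is needed. No convexity of the constraint set is required, because strong convergence comes from the norm squeeze rather than from any projection argument.
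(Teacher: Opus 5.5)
Your proposal is correct and matches the paper's proof step for step. Both arguments bound $\|u_n\|$ by the norm of an element of $S_k$ using feasibility, pass the pointwise residual condition to weak subsequential limits via continuity of $u\mapsto L_q^P u(x)$ and of $\overline P$, use the norm squeeze to get strong convergence to an element of $S_k$, and prove a) by contradiction with subsequences. The only cosmetic difference is in b), where the paper just notes that $\operatorname{dist}(u_n,S_k)=\|u_n-u^\star\|$ when $S_k=\{u^\star\}$, while you rerun the subsequence-of-subsequence argument; both work.
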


\begin{proof}
By Lemma~\ref{lem:min-norm-solutions-nonempty}, the set $S_k$ is nonempty. Choose a minimum-norm solution $\bar u\in S_k$. Since $\bar u$ solves \eqref{eq:pde}, it is admissible for every collocation problem \eqref{eq:opt_recov}. Hence, by the optimality of $u_n$,
\begin{align*}
    \|u_n\|_{\H_k(\overline{\Omega})}\leq \|\bar u\|_{\H_k(\overline{\Omega})}
    \qquad\text{for all }n\in\N.
\end{align*}
Thus $(u_n)_{n\in\N}$ is bounded in $\H_k(\overline{\Omega})$. Consider any weakly convergent subsequence $u_{n_j}\rightharpoonup u^\dagger$ in $\H_k(\overline{\Omega})$. For fixed $x\in\Omega$ and $q\in[Q]$, the map $u\mapsto L_q^P u(x)$ is a continuous linear functional by Assumption~\ref{ass:collocation-functionals}. Therefore $L_q^P u_{n_j}(x)\to L_q^P u^\dagger(x)$ for every $q\in[Q]$. By the continuity of $\overline P$, it follows that $R_P[u_{n_j}](x)\to R_P[u^\dagger](x)$. Since $R_P[u_n](x)\to0$ by assumption, we obtain $R_P[u^\dagger](x)=0$ for all $x\in\Omega$. The boundary residual is treated analogously, and hence $u^\dagger$ solves \eqref{eq:pde}.

Since $u^\dagger$ is a solution, its norm is bounded below by $\|\bar u\|_{\H_k(\overline{\Omega})}$, the minimal norm among all solutions. Together with weak lower semicontinuity of the norm and the estimate above, this yields
\begin{align*}
    \|\bar u\|_{\H_k(\overline{\Omega})}
    \leq
    \|u^\dagger\|_{\H_k(\overline{\Omega})}
    \leq
    \liminf_{j\to\infty}\|u_{n_j}\|_{\H_k(\overline{\Omega})}
    \leq
    \limsup_{j\to\infty}\|u_{n_j}\|_{\H_k(\overline{\Omega})}
    \leq
    \|\bar u\|_{\H_k(\overline{\Omega})}.
\end{align*}
Consequently, $u^\dagger\in S_k$ and $\|u_{n_j}\|_{\H_k(\overline{\Omega})}\to\|u^\dagger\|_{\H_k(\overline{\Omega})}$. Since weak convergence together with convergence of norms implies strong convergence in a Hilbert space, we have
\begin{align*}
    u_{n_j}\to u^\dagger
    \qquad\text{in }\H_k(\overline{\Omega}).
\end{align*}

We now prove a). Suppose, for contradiction, that $\operatorname{dist}_{\H_k(\overline{\Omega})}(u_n,S_k)\not\to0$. Then there exist $\varepsilon>0$ and a subsequence $(u_{n_j})_{j\in\N}$ such that
\begin{align*}
    \operatorname{dist}_{\H_k(\overline{\Omega})}(u_{n_j},S_k)
    \geq \varepsilon
    \qquad\text{for all }j\in\N.
\end{align*}
By boundedness and reflexivity, this subsequence has a weakly convergent further subsequence. By the preceding argument, this further subsequence converges strongly to some $u^\dagger\in S_k$. Hence its distance to $S_k$ converges to zero, contradicting the estimate above. Thus
\begin{align*}
    \operatorname{dist}_{\H_k(\overline{\Omega})}(u_n,S_k)\to0.
\end{align*}
If $S_k=\{u^\star\}$, then b) follows directly from a), since
\begin{align*}
    \operatorname{dist}_{\H_k(\overline{\Omega})}(u_n,S_k)
    =
    \|u_n-u^\star\|_{\H_k(\overline{\Omega})}.
\end{align*}
This proves the claim.
\end{proof}

\subsection{Convergence for small fill distance}

This subsection applies Proposition~\ref{prop:abstract_convergence} to sequences of collocation point sets with vanishing fill distance. Such sequences can be generated, for example, by using grids of decreasing mesh width to fill the domain uniformly, or by constructing collocation points via the geometric greedy method described in \cite[Sec.~5]{bib9}. In kernel interpolation, convergence is commonly formulated in terms of a vanishing fill distance. In the present PDE setting, however, the distribution of collocation points has to be controlled both in the interior of the domain and on the boundary. This motivates the following definition.

\begin{definition}[Effective fill distance]
Let $\Gamma\subset\R^d$ and let $Y\subset\Gamma$ be nonempty and finite. The fill distance of $Y$ in $\Gamma$ is
\begin{align*}
    h_Y^\Gamma
    :=
    \sup_{x\in\Gamma}\inf_{y\in Y}\|x-y\|.
\end{align*}
Let $\Omega\subset\R^d$ be open and let $X=X_\Omega\cup X_{\partial\Omega}$, where $X_\Omega\subset\Omega$ and $X_{\partial\Omega}\subset\partial\Omega$ are nonempty and finite. The effective fill distance of $X$ with respect to $\Omega$ is given by
\begin{align*}
    \overline h_X^\Omega
    :=
    \max\{h_{X_\Omega}^{\Omega},h_{X_{\partial\Omega}}^{\partial\Omega}\}.
\end{align*}
\end{definition}
Proposition~\ref{prop:abstract_convergence} yields the following convergence result for collocation sets with vanishing effective fill distance. Compared with \cite{bib1}, this yields a meaningful result without assuming uniqueness of the PDE solution.
\begin{theorem}
\label{thm:convergence_fill_distance}
Assume the setting of Assumption~\ref{ass:collocation-functionals}. Let $(X_n)_{n\in\N}$ be a sequence of sets of distinct collocation points of the form
\begin{align*}
    X_n
    &:=
    X_{\Omega,n}\cup X_{\partial\Omega,n},\\
    X_{\Omega,n}
    &:=
    \{x_1^n,\ldots,x_{M_n}^n\}\subset\Omega,\\
    X_{\partial\Omega,n}
    &:=
    \{x_1^{n\prime},\ldots,x_{M'_n}^{n\prime}\}
    \subset\partial\Omega .
\end{align*}
Assume that each $X_n$ satisfies the linear independence condition in Assumption~\ref{ass:collocation-functionals}, and write $\mathbf X_n:=\mathbf X(X_n)$. Moreover, assume that the effective fill distance satisfies
\begin{align*}
    \lim_{n\to\infty}\overline h_{X_n}^\Omega
    =
    0 .
\end{align*}
For each $n\in\N$, let $u_n\in\H_k(\overline{\Omega})$ be a minimizer of \eqref{eq:opt_recov} corresponding to the collocation set $X_n$. Then:
\begin{enumerate}
    \item[\textnormal{a)}] The sequence $(u_n)_{n\in\N}$ converges to the set $S_k$ of minimum-norm solutions in the sense that
    \begin{align*}
        \operatorname{dist}_{\H_k(\overline{\Omega})}(u_n,S_k)\to 0.
    \end{align*}

    \item[\textnormal{b)}] If \eqref{eq:pde} admits a unique minimum-norm solution $u^\star\in\H_k(\overline{\Omega})$, then
    \begin{align*}
        u_n
        \to
        u^\star
        \qquad
        \text{in } \H_k(\overline{\Omega}) .
    \end{align*}
\end{enumerate}
\end{theorem}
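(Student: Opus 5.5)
The plan is to reduce the theorem to Proposition~\ref{prop:abstract_convergence}: it suffices to show that $R_P[u_n](x)\to0$ for every $x\in\Omega$ and $R_B[u_n](x)\to0$ for every $x\in\partial\Omega$. Parts a) and b) then follow verbatim from parts a) and b) of that proposition.

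The first ingredient is a uniform bound together with equicontinuity. As in the proof of Proposition~\ref{prop:abstract_convergence}, any $\bar u\in S_k$ (nonempty by Lemma~\ref{lem:min-norm-solutions-nonempty}) is admissible for every collocation problem, so $\|u_n\|_{\H_k(\overline\Omega)}\le \|\bar u\|_{\H_k(\overline\Omega)}=:C$. For a linear differential operator $L$ of order at most $\ell$ and $x,y\in\overline\Omega$, the reproducing property gives
\begin{align*}
|Lu(x)-Lu(y)|\le \|u\|_{\H_k(\overline\Omega)}\,\|\gamma_x^L-\gamma_y^L\|_{\H_k(\overline\Omega)},
\qquad
\|\gamma_x^L-\gamma_y^L\|^2_{\H_k(\overline\Omega)}=L_{(1)}L_{(2)}k(x,x)-2L_{(1)}L_{(2)}k(x,y)+L_{(1)}L_{(2)}k(y,y).
\end{align*}
Since $k\in C^{2\ell}(\overline\Omega\times\overline\Omega)$ and $\overline\Omega$ is compact, the right-hand side is uniformly continuous in $(x,y)$ and vanishes on the diagonal. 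Hence there is a modulus $\omega_L$ with $\omega_L(h)\to0$ as $h\to0$ and $|Lu_n(x)-Lu_n(y)|\le C\,\omega_L(\|x-y\|)$ uniformly in $n$. In the same way, $|Lu_n(x)|\le C\sup_{z}\sqrt{L_{(1)}L_{(2)}k(z,z)}=:C'$, so all vectors $(L_1^Pu_n(x),\ldots,L_Q^Pu_n(x))$ lie in a fixed compact set $K\subset\R^Q$. On $K$ the function $\overline P$ is uniformly continuous.

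The second ingredient is the residual estimate. Fix $x\in\Omega$ and choose $y_n\in X_{\Omega,n}$ with $\|x-y_n\|\le h_{X_{\Omega,n}}^\Omega\le\overline h^\Omega_{X_n}$. Since $u_n$ satisfies the collocation constraint at $y_n$, we have $\overline P(L^P u_n(y_n))=f(y_n)$. Therefore
\begin{align*}
R_P[u_n](x)\le \bigl|\overline P(L^Pu_n(x))-\overline P(L^Pu_n(y_n))\bigr|+|f(y_n)-f(x)|.
\end{align*}
The first term tends to zero by the equicontinuity above and the uniform continuity of $\overline P$ on $K$. For the second term we need continuity of $f$ at $x$. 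I would obtain it from the fact that $f=\overline P(L^P\bar u)$ on $\Omega$ with $\bar u$ a solution, so $f$ is continuous; this is even the cleanest route, replacing $f(y_n)$ and $f(x)$ by $\overline P(L^P\bar u(\cdot))$ and reusing the same equicontinuity argument for $\bar u$. Since $y_n\to x$, the residual tends to zero. The boundary residual is handled identically, using $X_{\partial\Omega,n}$ and $h^{\partial\Omega}_{X_{\partial\Omega,n}}\to0$, with points chosen in $\partial\Omega$.

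The main subtlety lies in this second step. We must make sure that the boundary operators $L_r^B$ and the data $g$ are continuous up to $\partial\Omega$, and that the moduli of continuity are uniform over $\overline\Omega$. Both follow from $k\in C^{2\ell}$ on the compact set $\overline\Omega\times\overline\Omega$ and from writing $g=\overline B(L^B\bar u)$ on $\partial\Omega$. With these verified, Proposition~\ref{prop:abstract_convergence} yields a) and b).
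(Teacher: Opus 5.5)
Your proposal is correct and follows the paper's proof essentially step for step. Both bound $\|u_n\|$ by the norm of a feasible solution, use uniform equicontinuity of $\overline P(L^P u_n)$, apply the triangle inequality with a nearest collocation point, and conclude via Proposition~\ref{prop:abstract_convergence}; the paper cites Lemma~\ref{lem:equicontinuity} for the equicontinuity step, whereas you rederive it from $\|\gamma_x^L-\gamma_y^L\|^2$, and your explicit derivation of continuity of $f$ and $g$ from $f=\overline P(L^P\bar u)$ and $g=\overline B(L^B\bar u)$ fills a point the paper uses in this proof without justifying it there.
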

\begin{proof}
Let $u^\dagger\in\H_k(\overline{\Omega})$ be any solution of \eqref{eq:pde}, whose existence is guaranteed by Assumption~\ref{ass:collocation-functionals}. Since $u^\dagger$ satisfies the equation and the boundary condition at every collocation point, it is feasible for \eqref{eq:opt_recov} for each $n\in\N$. Hence, by the minimality of $u_n$,
\begin{align*}
    \|u_n\|_{\H_k(\overline{\Omega})}
    \leq
    \|u^\dagger\|_{\H_k(\overline{\Omega})}
    \qquad\text{for all }n\in\N.
\end{align*}
Thus $(u_n)_{n\in\N}$ is bounded in $\H_k(\overline{\Omega})$. For $n\in\N$, define
\begin{align*}
    \mathcal P_n(x)
    &:=
    \overline{P}\bigl(
        L_1^P u_n(x),\ldots,L_Q^P u_n(x)
    \bigr),
    \qquad x\in\overline{\Omega},\\
    \mathcal B_n(x)
    &:=
    \overline{B}\bigl(
        L_1^B u_n(x),\ldots,L_R^B u_n(x)
    \bigr),
    \qquad x\in\partial\Omega .
\end{align*}
By Lemma~\ref{lem:equicontinuity}, the families $\{\mathcal P_n:n\in\N\}$ and $\{\mathcal B_n:n\in\N\}$ are uniformly equicontinuous on $\overline{\Omega}$ and $\partial\Omega$, respectively.

We first show that the interior residuals vanish pointwise. Fix $x\in\Omega$ and let $\varepsilon>0$. By the uniform equicontinuity of $\{\mathcal P_n:n\in\N\}$ and the continuity of $f$ at $x$, there exists $\delta>0$ such that, for all $n\in\N$ and all $\overline{x}\in\Omega$ with $\|x-\overline{x}\|<\delta$,
\begin{align*}
    |\mathcal P_n(x)-\mathcal P_n(\overline{x})|
    <
    \frac{\varepsilon}{2},
    \qquad
    |f(x)-f(\overline{x})|
    <
    \frac{\varepsilon}{2}.
\end{align*}
Since $\overline h_{X_n}^\Omega\to0$, there exists $n^\star\in\N$ such that, for every $n\geq n^\star$, one can find $x_i^n\in X_{\Omega,n}$ with $\|x-x_i^n\|<\delta$. The collocation constraints imply $\mathcal P_n(x_i^n)=f(x_i^n)$, and therefore, for all $n\geq n^\star$,
\begin{align*}
    R_P[u_n](x)
    &=
    |\mathcal P_n(x)-f(x)|\\
    &=
    \left|
        \mathcal P_n(x)-f(x)
        -
        \bigl(\mathcal P_n(x_i^n)-f(x_i^n)\bigr)
    \right|\\
    &\leq
    |\mathcal P_n(x)-\mathcal P_n(x_i^n)|
    +
    |f(x)-f(x_i^n)|<\varepsilon .
\end{align*}
Hence $R_P[u_n](x)\to0$ for every $x\in\Omega$. 

The boundary residuals are handled analogously, yielding $R_B[u_n](x)\to0$ for every $x\in\partial\Omega$. Hence the hypotheses of Proposition~\ref{prop:abstract_convergence} are satisfied. Statement \textnormal{a)} then follows directly, while statement \textnormal{b)} follows under the additional uniqueness assumption.
\end{proof}

\subsection{The residual-greedy approach}

Residual-greedy strategies have been successfully applied to linear symmetric kernel collocation. They provide a highly target-dependent method for choosing collocation points. In particular, the convergence results in \cite{bib15} motivate residual-based greedy strategies for nonlinear symmetric kernel collocation. In the nonlinear setting considered here, we prove convergence of the resulting adaptive approximation scheme without deriving convergence rates. The numerical experiments reported later in the paper suggest that this strategy improves convergence behaviour and stability, and that it can provide accurate approximations in practice even in situations where the exact solution may not belong to the chosen RKHS.

We begin by formulating the residual-greedy scheme, which can be viewed as a natural extension of the strategy presented in \cite{bib3}. The residual serves as an error indicator. For $u\in\H_k(\overline\Omega)$, define
\begin{align*}
    \eta_u:\overline{\Omega}\to [0,\infty),\qquad
    x\mapsto
    \begin{cases}
        R_P[u](x), & x\in\Omega,\\
        R_B[u](x), & x\in\partial\Omega.
    \end{cases}
\end{align*}
Since $\eta_u$ is not necessarily continuous when approaching the boundary from the interior, it need not attain a maximum on $\overline{\Omega}$. Nevertheless, in the setting of Assumption~\ref{ass:collocation-functionals}, it is bounded. Indeed, since $R_P[u]$ extends continuously to $\overline{\Omega}$, one has
\begin{align*}
    \sup_{x\in\overline\Omega} \eta_u(x)
    \leq
    \sup_{x\in\overline\Omega} R_P[u](x)
    +
    \sup_{x\in\partial\Omega} R_B[u](x)
    <\infty.
\end{align*}
This motivates the use of a weak greedy selection rule.

\begin{definition}[$\alpha$-weak greedy sequence]
\label{def:weak-greedy-sequence}
In the setting of Assumption~\ref{ass:collocation-functionals}, let $\eta_u$ be defined as above, and let $\alpha\in(0,1)$. A sequence $(x_n)_{n\in\N}\subset\overline{\Omega}$ is called an $\alpha$-weak greedy sequence if it can be constructed iteratively as follows. Set $u_0\equiv 0\in\H_k(\overline{\Omega})$. For $n\geq 1$:
\begin{itemize}
    \item select $x_n\in\overline{\Omega}\setminus\{x_1,\ldots,x_{n-1}\}$ such that
    \begin{align*}
        \eta_{u_{n-1}}(x_n)
        \geq
        \alpha\sup_{x\in\overline{\Omega}}\eta_{u_{n-1}}(x);
    \end{align*}

    \item compute $u_n$ as a minimizer of \eqref{eq:opt_recov} using $x_1,\ldots,x_n$ as interior or boundary collocation points according to whether they lie in $\Omega$ or $\partial\Omega$.
\end{itemize}
\end{definition}
The sequence is treated as infinite for the purposes of the analysis; if an exact solution is reached after finitely many steps, the remaining points may be chosen arbitrarily among those not selected before. In computations, by contrast, the iteration is stopped once the residual reaches a prescribed tolerance in the $\infty$-norm or after a prescribed maximum number of steps. A further practical issue is the existence of a minimizer $u_n$ at each iteration. In certain RKHS settings, such existence can be guaranteed by results such as \cite[Thm.\,10.45]{bib4}. To keep the framework general, this existence requirement is included in the definition of the greedy sequence. Under the same assumptions as in the preceding convergence results, we obtain the following theorem.
\begin{theorem}[Convergence of the residual-greedy scheme]
\label{thm:greedy_convergence}
Assume the admissible collocation setting of Assumption~\ref{ass:collocation-functionals}. Let $\alpha\in(0,1)$ and let $(x_n)_{n\in\N}\subset\overline{\Omega}$ be an $\alpha$-weak greedy sequence with corresponding generalized interpolants $(u_n)_{n\in\N}\subset\H_k(\overline{\Omega})$. Then:
\begin{enumerate}
    \item[\textnormal{a)}] The sequence $(u_n)_{n\in\N}$ converges to the set $S_k$ of minimum-norm solutions in the sense that
    \begin{align*}
        \operatorname{dist}_{\H_k(\overline{\Omega})}(u_n,S_k)\to 0.
    \end{align*}

    \item[\textnormal{b)}] If \eqref{eq:pde} admits a unique minimum-norm solution $u^\star\in\H_k(\overline{\Omega})$, then
    \begin{align*}
        u_n\to u^\star
        \qquad\text{in }\H_k(\overline{\Omega}).
    \end{align*}
\end{enumerate}
\end{theorem}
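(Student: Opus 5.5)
The plan is to reduce the theorem to Proposition~\ref{prop:abstract_convergence}. It then suffices to show that $\sup_{x\in\overline\Omega}\eta_{u_n}(x)\to0$, since this gives $R_P[u_n](x)\to0$ for every $x\in\Omega$ and $R_B[u_n](x)\to0$ for every $x\in\partial\Omega$. First, every solution $\bar u\in S_k$ is feasible for each collocation problem. Hence $\|u_n\|_{\H_k(\overline\Omega)}\le\|\bar u\|_{\H_k(\overline\Omega)}$, so the sequence is bounded. By Lemma~\ref{lem:equicontinuity}, the families $\{\mathcal P_n\}$ on $\overline\Omega$ and $\{\mathcal B_n\}$ on $\partial\Omega$ are therefore uniformly equicontinuous, where $\mathcal P_n,\mathcal B_n$ are defined from $u_n$ as in the proof of Theorem~\ref{thm:convergence_fill_distance}. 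Since $f$ and $g$ are continuous on the compact sets $\overline\Omega$ and $\partial\Omega$ (implicitly, $f$ extends continuously to $\overline\Omega$), they are uniformly continuous. Consequently $\{R_P[u_n]\}$ is uniformly equicontinuous on $\overline\Omega$ and $\{R_B[u_n]\}$ on $\partial\Omega$.

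Suppose, for contradiction, that $s_n:=\sup_{\overline\Omega}\eta_{u_n}$ does not tend to zero. Then there are $\varepsilon>0$ and infinitely many indices $n$ with $s_{n}\ge\varepsilon$. For each such $n$, the weak greedy rule gives $\eta_{u_n}(x_{n+1})\ge\alpha\varepsilon$. Let $\delta>0$ be the common modulus from equicontinuity for the tolerance $\alpha\varepsilon/2$; for the boundary residuals, use the modulus of $\{R_B[u_n]\}$ on $\partial\Omega$.

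The key observation is that every earlier point $x_m$ with $m\le n$ is a collocation point for $u_n$. Hence $\eta_{u_n}(x_m)=0$, whether $x_m$ lies in $\Omega$ (exact PDE constraint) or on $\partial\Omega$ (exact boundary constraint). This is where the nested structure of the greedy point sets is used. We would like to conclude that $\|x_{n+1}-x_m\|\ge\delta$ for all $m\le n$. Infinitely many points of the compact set $\overline\Omega$ that are pairwise $\delta$-separated would be a contradiction.

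The main obstacle is the mixed case in which $x_{n+1}$ and $x_m$ lie in different parts of the domain. If both are interior, then $|R_P[u_n](x_{n+1})-R_P[u_n](x_m)|<\alpha\varepsilon/2$ whenever $\|x_{n+1}-x_m\|<\delta$, and the contradiction follows. The same holds if both lie on $\partial\Omega$. If one is interior and the other lies on the boundary, then $\eta$ uses different residuals at the two points, and closeness gives no comparison. I would handle this by separating the two subsequences. Infinitely many of the selected points $x_{n+1}$ (for the bad indices) lie either in $\Omega$ or on $\partial\Omega$. Say they lie in $\Omega$; the boundary case is symmetric. Compare each such point only with the earlier interior points, which are also drawn from this subsequence. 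Among these bad interior selections, any two with indices $m<n+1$ satisfy $\|x_{n+1}-x_m\|\ge\delta$, because $R_P[u_n](x_m)=0$ and $R_P[u_n](x_{n+1})\ge\alpha\varepsilon$. This gives an infinite $\delta$-separated subset of the bounded set $\overline\Omega$, which is impossible by compactness. Hence $s_n\to0$.

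Finally, apply Proposition~\ref{prop:abstract_convergence}, which yields a) and, under uniqueness, b).
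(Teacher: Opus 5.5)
Your proposal is correct and follows the paper's proof essentially step for step. Both arguments use boundedness from the feasibility of a minimum-norm solution and uniform equicontinuity of the residuals via Lemma~\ref{lem:equicontinuity}. Both then argue by contradiction that the bad selections $x_{n+1}$ contain an infinite interior or boundary family that is $\delta$-separated, because earlier points are collocation points for $u_n$. Compactness and Proposition~\ref{prop:abstract_convergence} then finish the proof. The only minor difference is that you treat the continuous extension of $f$ to $\overline{\Omega}$ as implicit. The paper derives it from the existence of a solution $u^\dagger\in\H_k(\overline{\Omega})$, since $f=\overline{P}(L_1^Pu^\dagger,\ldots,L_Q^Pu^\dagger)$ on $\Omega$ and this expression is continuous on $\overline{\Omega}$.
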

\begin{proof}
Let $\bar u\in S_k$ be a minimum-norm solution of \eqref{eq:pde}, whose existence follows from Assumption \ref{ass:collocation-functionals} and Lemma \ref{lem:min-norm-solutions-nonempty}. Since $\bar u$ is feasible for every collocation problem generated by the greedy scheme, the optimality of $u_n$ gives
\begin{align*}
    \|u_n\|_{\H_k(\overline{\Omega})}
    \leq
    \|\bar u\|_{\H_k(\overline{\Omega})}
    \qquad\text{for all }n\in\N.
\end{align*}
Thus $(u_n)_{n\in\N}$ is bounded in $\H_k(\overline{\Omega})$. Moreover, since \eqref{eq:pde} admits a solution in $\H_k(\overline{\Omega})$, the right-hand side $f$ admits a continuous extension to $\overline{\Omega}$ through the left-hand side of the PDE. Hence, by Lemma~\ref{lem:equicontinuity} and the uniform continuity of the continuous extensions of the data, the residual families $(R_P[u_n])_{n\in\N}$ and $(R_B[u_n])_{n\in\N}$ are uniformly equicontinuous on $\overline{\Omega}$ and $\partial\Omega$, respectively.

We claim that $\|\eta_{u_n}\|_\infty\to0$. Suppose, for contradiction, that this is not the case. Then there exist $\varepsilon>0$ and a subsequence $(u_{n_p})_{p\in\N}$ such that $\|\eta_{u_{n_p}}\|_\infty\geq\varepsilon$ for all $p\in\N$. By uniform equicontinuity, there exists $\delta>0$ such that, for all $n\in\N$,
\begin{align*}
    |R_P[u_n](x)-R_P[u_n](z)|
    <
    \alpha\varepsilon
    \qquad
    \text{whenever }x,z\in\overline{\Omega},\ \|x-z\|<\delta,
\end{align*}
and
\begin{align*}
    |R_B[u_n](x)-R_B[u_n](z)|
    <
    \alpha\varepsilon
    \qquad
    \text{whenever }x,z\in\partial\Omega,\ \|x-z\|<\delta.
\end{align*}
Set $\xi_p:=x_{n_p+1}$. By the $\alpha$-weak greedy choice, $\eta_{u_{n_p}}(\xi_p)\geq\alpha\varepsilon$. We show that the points $(\xi_p)_{p\in\N}$ are $\delta$-separated within $\Omega$ and within $\partial\Omega$. Indeed, let $r<p$. Then $\xi_r=x_{n_r+1}$ has already been selected before the construction of $u_{n_p}$ and is therefore one of the collocation points used for $u_{n_p}$. If $\xi_p,\xi_r\in\Omega$, then the collocation constraint gives $R_P[u_{n_p}](\xi_r)=0$. Hence $\|\xi_p-\xi_r\|<\delta$ would imply
\begin{align*}
    \alpha\varepsilon
    \leq
    R_P[u_{n_p}](\xi_p)
    =
    |R_P[u_{n_p}](\xi_p)-R_P[u_{n_p}](\xi_r)|
    <
    \alpha\varepsilon,
\end{align*}
which is impossible. The same argument on $\partial\Omega$ shows that $\|\xi_p-\xi_r\|\geq\delta$ whenever $\xi_p,\xi_r\in\partial\Omega$. Thus either infinitely many of the points $\xi_p$ lie in $\Omega$, or infinitely many lie in $\partial\Omega$. Since $\overline{\Omega}$, and hence also $\partial\Omega$, is compact, such an infinite subsequence has an accumulation point. This contradicts the $\delta$-separation established above. Therefore
\begin{align*}
    \|\eta_{u_n}\|_\infty\to0.
\end{align*}
In particular, $R_P[u_n](x)\to0$ for every $x\in\Omega$ and $R_B[u_n](y)\to0$ for every $y\in\partial\Omega$. The assertions now follow directly from Proposition~\ref{prop:abstract_convergence}.
\end{proof}
\section{Numerical experiments}

We focus on the behaviour of the residual-greedy method. As a model problem, we consider the stationary nonlinear heat equation on $\Omega=(0,1)^2$,
\begin{align*}
    \Delta u(x)+u^3(x)
    &=
    f(x),
    \qquad x\in\Omega,\\
    u(x)
    &=
    g(x),
    \qquad x\in\partial\Omega .
\end{align*}
This problem was also considered in \cite{bib1}. Following the approach proposed there, we solve the corresponding optimal recovery problems \eqref{eq:opt_recov} by a Gauss--Newton method. The data are prescribed through a chosen exact solution $u$: we set $f(x):=\Delta u(x)+u^3(x)$ for $x\in\Omega$ and $g(x):=u(x)$ for $x\in\partial\Omega$. All experiments use the Gaussian kernel defined in \eqref{eq:def_gaussian} with shape parameter $\gamma=5$.

To test the method under the theoretical assumptions, we use
$u(x):=u_{\mathcal H}(x):=k_\gamma((0.2,0.5),x)$ for
$x\in\overline{\Omega}$. We also consider the exact solution $u(x):=u_{\sin}(x):=\sin(\pi x_1)\sin(\pi x_2)$ for
$x\in\overline{\Omega}$, which, for the Gaussian kernel, does not belong to
the associated RKHS; see, for instance, \cite[Lem.~A.3.1]{bib7}.

Our implementation differs from the theoretical framework in two respects. First, instead of selecting each new collocation point from the full set $\overline{\Omega}$, we periodically alternate between choosing interior and boundary collocation points; in our experiments, this led to more stable behaviour. Second, the maximization of the residual is carried out over fixed discrete grids $\Omega_G$ and $\partial\Omega_G$. This is computationally natural, since the residual is generally nonconvex and global maximization is not practical, especially in the interior, where a maximum need not be attained. The resulting implementation is summarized in Fig.~\ref{alg:residual_greedy}. The optimal recovery problem in $\textsc{OptRecov}$ is solved by the Gauss--Newton scheme proposed in \cite{bib1}.

\begin{algorithm}[ht]
\captionsetup{margin=4cc}
\caption{Residual-greedy method.}
\label{alg:residual_greedy}
\footnotesize
\centering
\begin{minipage}{0.92\textwidth}

\textbf{Input:} Interior grid $\Omega_G$, boundary grid $\partial\Omega_G$,
maximum number of iterations $\mathrm{MaxIter}$.

\textbf{Output:} Approximation $u_{\mathrm{MaxIter}}$.

\textbf{Initialize} $\mathcal X_{\Omega,0}:=\emptyset$,
$\mathcal X_{\partial\Omega,0}:=\emptyset$, and $u_0\equiv0$.

\textbf{for} $n=1,\ldots,\mathrm{MaxIter}$:

\hspace*{1em}Set $\mathcal X_{\Omega,n}:=\mathcal X_{\Omega,n-1}$ and
$\mathcal X_{\partial\Omega,n}:=\mathcal X_{\partial\Omega,n-1}$.

\hspace*{1em}\textbf{if} $n\bmod 4=0$:

\hspace*{2em}Choose
\begin{align*}
    x_n\in\arg\max_{x\in\partial\Omega_G\setminus\mathcal X_{\partial\Omega,n}}
    R_B[u_{n-1}](x),
\end{align*}

\hspace*{2em}and set
$\mathcal X_{\partial\Omega,n}:=
\mathcal X_{\partial\Omega,n}\cup\{x_n\}$.

\hspace*{1em}\textbf{else}:

\hspace*{2em}Choose
\begin{align*}
    x_n\in\arg\max_{x\in\Omega_G\setminus\mathcal X_{\Omega,n}}
    R_P[u_{n-1}](x),
\end{align*}

\hspace*{2em}and set
$\mathcal X_{\Omega,n}:=
\mathcal X_{\Omega,n}\cup\{x_n\}$.

\hspace*{1em}Compute
\begin{align*}
    u_n:=\textsc{OptRecov}(\mathcal X_{\Omega,n},
    \mathcal X_{\partial\Omega,n}).
\end{align*}

\textbf{return} $u_{\mathrm{MaxIter}}$.

\end{minipage}
\end{algorithm}

As a baseline, we compare the residual-greedy method with collocation sets generated by the geometric greedy method from \cite[Sec.~5]{bib9}. The geometric greedy procedure is applied separately to the interior grid $\Omega_G$ and the boundary grid $\partial\Omega_G$, producing two ordered sets of candidate points. To ensure a fair comparison, the baseline uses the same number of interior and boundary collocation points as the residual-greedy construction at each iteration, corresponding to the same $3:1$ ratio of interior to boundary points. All reported residuals and errors are evaluated on fixed validation grids.

For the test solution $u_\H$, the residual-greedy method consistently yields smaller residual errors than the approach based on precomputed collocation points; see Figure~\ref{fig:residual-comparison-k-g5-500}. Moreover, the benefit of adding further precomputed points appears to diminish after approximately 250 centers, whereas the residual-greedy approach continues to improve and eventually reaches an $\infty$-norm of the interior residual that is several orders of magnitude lower. 

\begin{figure}[hb]
\centering
\begin{tikzpicture}
\begin{axis}[
    width=0.78\textwidth,
    height=45mm,
    xlabel={Number of collocation points},
    ylabel={$\|R_P[u_n]\|_\infty$},
    ymode=log,
    grid=both,
    ymax=1e2,
    ymin=1e-14,
    ytick={1e-12,1e-10,1e-8,1e-6,1e-4,1e-2,1e0,1e2},
    minor x tick num=0,
    every axis plot/.append style={line width=1.2pt},
    xmin=0,
    xmax=500,
    legend style={
        at={(0.98,0.98)},
        anchor=north east,
        legend columns=1
    },
]
\addplot+[mark=none] table[x=x,y=residual] {data/residual_fgreedy_g5.dat};
\addlegendentry{Residual-greedy}

\addplot+[mark=none] table[x=x,y=residual] {data/residual_fixed_g5.dat};
\addlegendentry{Geometric greedy}
\end{axis}
\end{tikzpicture}
\caption{Comparison of the $\infty$-norm of the interior residual for $u_\H$ using residual-greedy and geometric greedy collocation points.}
\label{fig:residual-comparison-k-g5-500}
\end{figure}
Turning to Figure~\ref{fig:error-comparison-k-g5-500}, the advantage is less pronounced. Although the residual-greedy approach slightly outperforms the predetermined centers for most iteration counts, its main benefit is that it stabilizes at a more accurate approximation after approximately 250 iterations.

\begin{figure}[H]
\centering
\begin{tikzpicture}
\begin{axis}[
    width=0.78\textwidth,
    height=45mm,
    xlabel={Number of collocation points},
    ylabel={$\|u-u_n\|_\infty$},
    ymode=log,
    grid=both,
    ytick={1e-12,1e-10,1e-8,1e-6,1e-4,1e-2,1e0},
    minor x tick num=0,
    every axis plot/.append style={line width=1.2pt},
    xmin=0,
    xmax=500,
    legend style={
        at={(0.98,0.98)},
        anchor=north east,
        legend columns=1
    },
]
\addplot+[mark=none] table[x=x,y=error] {data/error_fgreedy_g5.dat};
\addlegendentry{Residual-greedy}

\addplot+[mark=none] table[x=x,y=error] {data/error_fixed_g5.dat};
\addlegendentry{Geometric greedy}
\end{axis}
\end{tikzpicture}

\caption{Comparison of the $\infty$-norm approximation error for $u_\H$ using residual-greedy and geometric greedy collocation points.}
\label{fig:error-comparison-k-g5-500}
\end{figure}
The improved residual performance of the residual-greedy method is further supported by Figure~\ref{fig:k-point-distribution}. The selected centers concentrate in regions that have a stronger influence on the residual, while comparatively less relevant regions are sampled less densely. The fact that the improvement in the residual is not fully reflected in the $\infty$-norm error is not surprising, since for this nonlinear PDE the residual does not directly control the $\infty$-norm error. This is consistent with the linear theory in \cite{bib3}, where the strongest convergence results are also formulated in terms of the residual. 

\begin{figure}[ht]
\centering
\begin{minipage}{0.45\textwidth}
    \includegraphics[
        trim=80 0 80 0,
        clip,
        width=\textwidth
    ]{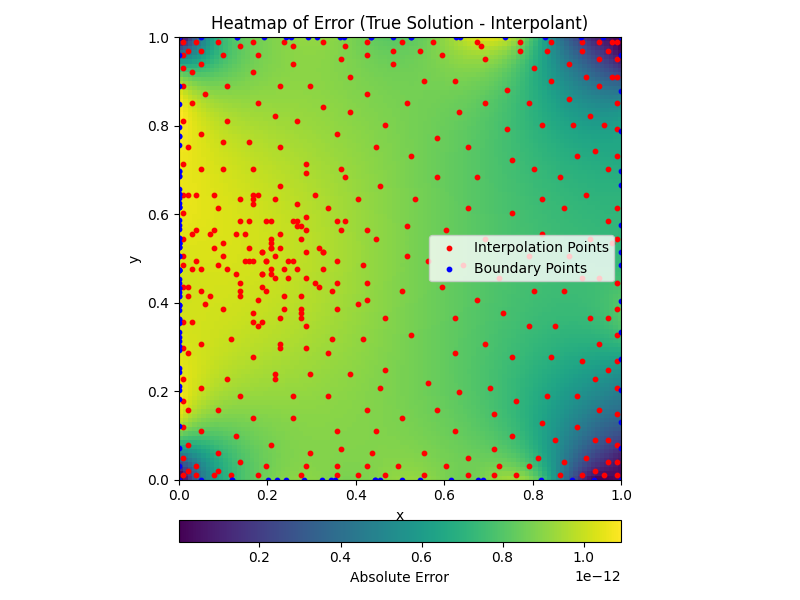}
    \centering a)
\end{minipage}
\hspace{0.05\textwidth}
\begin{minipage}{0.45\textwidth}
    \includegraphics[
        trim=80 0 80 0,
        clip,
        width=\textwidth
    ]{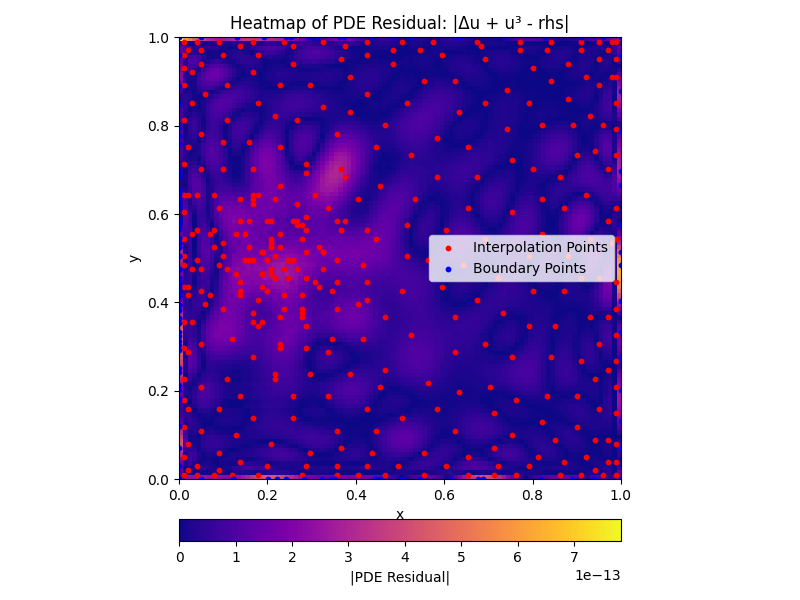}
    \centering b)
\end{minipage}
\caption{Distribution of the collocation points after residual-greedy iterations for the PDE solution $u_\H$. The points are displayed together with a) the error and b) the interior residual.}
\label{fig:k-point-distribution}
\end{figure}
For the test problem with exact solution $u_{\sin}$, the results show an even clearer advantage of the residual-greedy method. As shown in Figure~\ref{fig:residual-comparison-sin-g5-1000}, the residual-greedy approach substantially outperforms the predetermined collocation points with respect to the residual. Although this behaviour is not covered by the convergence theory developed above, it suggests that the method may also be effective for PDEs whose solutions do not lie in the RKHS, and motivates further investigation.

\begin{figure}[ht]
\centering
\begin{tikzpicture}
\begin{axis}[
    width=0.78\textwidth,
    height=45mm,
    xlabel={Number of collocation points},
    ylabel={$\|R_P[u_n]\|_\infty$},
    ymode=log,
    grid=both,
    ytick={1e-6,1e-4,1e-2,1e0},
    minor x tick num=0,
    every axis plot/.append style={line width=1.2pt},
    ymax=1e2,
    ymin=1e-8,
    xmin=0,
    xmax=1000,
    legend style={
        at={(0.98,0.98)},
        anchor=north east,
        legend columns=1
    },
]
\addplot+[mark=none] table[x=x,y=residual] {data/residual_fgreedy_sin_g5.dat};
\addlegendentry{Residual-greedy}

\addplot+[mark=none] table[x=x,y=residual] {data/residual_fixed_sin_g5.dat};
\addlegendentry{Geometric greedy}
\end{axis}
\end{tikzpicture}
\caption{Comparison of the interior residual for residual-greedy and geometric greedy collocation points for the test solution $u_{\sin}$.}
\label{fig:residual-comparison-sin-g5-1000}
\end{figure}
Overall, the experiments support the qualitative convergence results established above. The residual-greedy strategy consistently reduces the PDE residual more effectively than the geometric greedy baseline, whereas the improvement in the $\infty$-norm error appears to depend more strongly on the relation between the residual and the solution error. We expect that this is partly due to the fact that, in our experience, the greedy selection method leads to a more stable linear system. In general, the results are consistent with the theoretical analysis, which guarantees convergence but does not provide convergence rates or a direct residual-to-error estimate. More detailed experiments and further discussion can be found in \cite{bib7}.
\section{Outlook}
We briefly summarize the main contributions of this work. We established a convergence framework for the nonlinear optimal recovery scheme that works directly within the RKHS and does not require uniqueness of the PDE solution. This framework was used to prove RKHS convergence both for collocation sets with vanishing fill distance and for a new residual-greedy selection strategy. Finally, the numerical experiments demonstrated the practical effectiveness of the residual-greedy approach.

A natural next step is to investigate whether convergence rates analogous to those in \cite{bib3} and \cite{bib15} can be obtained for the nonlinear residual-greedy method. The present experiments do not suggest that such results should hold in full generality. Nevertheless, it remains an interesting question whether convergence rates can be established under suitable additional assumptions. A first step in this direction would be to test the method on a broader class of nonlinear PDEs. Further development is also needed for the solution of the finite-dimensional optimal recovery problems. While the Gauss--Newton method used in this paper, following \cite{bib1}, performs well in the experiments considered here, it is unclear how robustly it extends to the full class of PDEs covered by the theoretical framework. The observed experimental success for functions outside the RKHS also suggests further investigation of PDE solutions beyond the RKHS, as has been analyzed in the linear PDE setting in \cite{bib16}. Another direction for future work is to consider operators beyond differential operators. Integral operators provide a natural next case and may help identify a more abstract class of operator equations to which the present approach can be applied.
\vspace{\baselineskip}
\newpage
\begin{acknowledgement}
    Funded by Deutsche Forschungsgemeinschaft (DFG, German Research Foundation) under Project No.  539436611 and Project No. 540080351. The authors used ChatGPT by OpenAI during the preparation of this paper for brainstorming, language revision, improving exposition, and support with numerical implementation. All AI-assisted output was critically reviewed and revised by the authors. The mathematical arguments, proofs, numerical results, and scientific conclusions presented in this paper remain the authors' responsibility.
\end{acknowledgement}

\printbibliography

\begin{appendix}
\section{Auxiliary results}
\begin{lemma}
\label{lem:min-norm-solutions-nonempty}
    Assume the setting of Assumption~\ref{ass:collocation-functionals}. Let
    \begin{align*}
        \mathcal{S}_k
        :=
        \{u\in\H_k(\overline{\Omega}) \mid
        R_P[u](x)=0\ \forall x\in\Omega,\ 
        R_B[u](y)=0\ \forall y\in\partial\Omega\}
    \end{align*}
    be the set of RKHS solutions to \eqref{eq:pde}, and define
    \begin{align*}
        S_k
        :=
        \left\{
        u\in\mathcal{S}_k \mid
        \|u\|_{\H_k(\overline{\Omega})}
        =
        \inf_{v\in\mathcal{S}_k}\|v\|_{\H_k(\overline{\Omega})}
        \right\}.
    \end{align*}
    Then $S_k\neq\emptyset$.
\end{lemma}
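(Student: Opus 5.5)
The plan is the direct method of the calculus of variations in the Hilbert space $\H_k(\overline{\Omega})$. By Assumption~\ref{ass:collocation-functionals}, $\mathcal S_k\neq\emptyset$, so $m:=\inf_{v\in\mathcal S_k}\|v\|_{\H_k(\overline{\Omega})}$ is a finite nonnegative number. Choose a minimizing sequence $(v_n)_{n\in\N}\subset\mathcal S_k$ with $\|v_n\|_{\H_k(\overline{\Omega})}\to m$. This sequence is bounded. Since $\H_k(\overline{\Omega})$ is a Hilbert space and hence reflexive, we can extract a weakly convergent subsequence $v_{n_j}\rightharpoonup v^\ast$.

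The key step is to show that $\mathcal S_k$ is weakly sequentially closed, so that $v^\ast\in\mathcal S_k$. Fix $x\in\Omega$. For each $q\in[Q]$, the map $u\mapsto L_q^Pu(x)$ is a continuous linear functional on $\H_k(\overline{\Omega})$; this uses $k\in C^{2\ell}$, the derivative reproducing property, and the remark after Assumption~\ref{ass:collocation-functionals}. Weak convergence therefore gives $L_q^Pv_{n_j}(x)\to L_q^Pv^\ast(x)$ for every $q$. Continuity of $\overline P$ then yields
\begin{align*}
0=R_P[v_{n_j}](x)\to R_P[v^\ast](x),
\end{align*}
so $R_P[v^\ast](x)=0$. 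The same argument applied to $\overline B$ and the functionals $u\mapsto L_r^Bu(y)$ at each $y\in\partial\Omega$ gives $R_B[v^\ast](y)=0$. Hence $v^\ast\in\mathcal S_k$. This is exactly the weak-limit argument already used in the proof of Proposition~\ref{prop:abstract_convergence}, now applied with zero residuals.

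Finally, weak lower semicontinuity of the norm gives
\begin{align*}
m\le\|v^\ast\|_{\H_k(\overline{\Omega})}\le\liminf_{j\to\infty}\|v_{n_j}\|_{\H_k(\overline{\Omega})}=m.
\end{align*}
So $v^\ast$ attains the infimum and lies in $S_k$, which proves the lemma.

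I do not expect a real obstacle. The only delicate point is that the boundary functionals $u\mapsto L_r^Bu(y)$ must be well defined and continuous for $y\in\partial\Omega$. This follows because $u\in C^\ell(\overline{\Omega})$ and the derivative reproducing property holds on $\overline{\Omega}$, so point evaluation of derivatives of order at most $\ell$ is bounded by $\|\partial_1^\alpha\partial_2^\alpha k(y,y)\|^{1/2}$ times the norm. The argument is pointwise and needs no compactness beyond weak compactness of bounded sets, so nonconvexity of $\mathcal S_k$ causes no difficulty.
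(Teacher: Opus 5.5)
Your proposal is correct and follows essentially the same route as the paper. Both arguments take a minimizing sequence, extract a weakly convergent subsequence, show the limit lies in $\mathcal{S}_k$ using continuity of the pointwise derivative functionals together with continuity of $\overline P$ and $\overline B$, and conclude by weak lower semicontinuity of the norm; the only difference is that the paper proves weak closedness of $\mathcal{S}_k$ as a separate first step before extracting the subsequence.
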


\begin{proof}
    By Assumption~\ref{ass:collocation-functionals}, the set $\mathcal{S}_k$ is nonempty.
    We first show that $\mathcal{S}_k$ is weakly closed. Let $(w_j)_{j\in\N}\subset\mathcal{S}_k$ and assume that $w_j\rightharpoonup w$ in $\H_k(\overline{\Omega})$. Fix $x\in\Omega$ and $q\in[Q]$. By Assumption~\ref{ass:collocation-functionals}, the map $u\mapsto L_q^P u(x)$ is a continuous linear functional on $\H_k(\overline{\Omega})$. Hence $L_q^P w_j(x)\to L_q^P w(x)$. Since this holds for every $q\in[Q]$, and since $\overline P$ is continuous, we obtain $R_P[w_j](x)\to R_P[w](x)$. Since $w_j\in\mathcal{S}_k$, we have $R_P[w_j](x)=0$ for all $j$, and therefore $R_P[w](x)=0$. The same argument, using the boundary operators and the continuity of $\overline B$, gives $R_B[w](y)=0$ for all $y\in\partial\Omega$. Thus $w\in\mathcal{S}_k$, and $\mathcal{S}_k$ is weakly closed.

    Set $m:=\inf_{u\in\mathcal{S}_k}\|u\|_{\H_k(\overline{\Omega})}$. Since $\mathcal{S}_k\neq\emptyset$, there exists a minimizing sequence $(u_j)_{j\in\N}\subset\mathcal{S}_k$ such that $\|u_j\|_{\H_k(\overline{\Omega})}\to m$. In particular, $(u_j)_{j\in\N}$ is bounded in $\H_k(\overline{\Omega})$. Since Hilbert spaces are reflexive, there exists a subsequence, again denoted by $(u_j)_{j\in\N}$, and some $u^\star\in\H_k(\overline{\Omega})$ such that $u_j\rightharpoonup u^\star$ in $\H_k(\overline{\Omega})$. By weak closedness of $\mathcal{S}_k$, we have $u^\star\in\mathcal{S}_k$. Moreover, the norm is weakly lower semicontinuous, hence
    \begin{align*}
        \|u^\star\|_{\H_k(\overline{\Omega})}
        \leq
        \liminf_{j\to\infty}
        \|u_j\|_{\H_k(\overline{\Omega})}
        =
        m.
    \end{align*}
    Since $u^\star\in\mathcal{S}_k$, the definition of $m$ also gives $m\leq \|u^\star\|_{\H_k(\overline{\Omega})}$. Therefore $\|u^\star\|_{\H_k(\overline{\Omega})}=m$, and hence $u^\star\in S_k$. Thus $S_k\neq\emptyset$.
\end{proof}

\begin{lemma}
\label{lem:equicontinuity}
Assume the setting of Assumption~\ref{ass:collocation-functionals}, and let
$U\subset\H_k(\overline{\Omega})$ be bounded.

For $u\in U$, define
\begin{align*}
    P(u)(x)
    &:=
    \overline{P}\bigl(
        L_1^P u(x),\ldots,L_Q^P u(x)
    \bigr),
    \qquad x\in\overline{\Omega},\\
    B(u)(x)
    &:=
    \overline{B}\bigl(
        L_1^B u(x),\ldots,L_R^B u(x)
    \bigr),
    \qquad x\in\partial\Omega .
\end{align*}
Then the families
\begin{align*}
    \{P(u):u\in U\}
    \qquad\text{and}\qquad
    \{B(u):u\in U\}
\end{align*}
are uniformly equicontinuous on $\overline{\Omega}$ and $\partial\Omega$,
respectively.
\end{lemma}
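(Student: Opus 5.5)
The plan is to prove uniform equicontinuity in two layers: first for the linear quantities $L_q^P u$ and $L_r^B u$, $u\in U$, and then transfer it through the nonlinear maps $\overline P$ and $\overline B$ using uniform continuity on a compact set. Let $C:=\sup_{u\in U}\|u\|_{\H_k(\overline{\Omega})}<\infty$.

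\textbf{Step 1: the linear quantities.} Fix a linear differential operator $L$ of order at most $\ell$, say $L=\sum_{|\beta|\le\ell}a_\beta\partial^\beta$ with continuous coefficients $a_\beta$ on $\overline\Omega$ (continuity of the coefficients is implicit in the paper's claim that $L:\H_k(\overline\Omega)\to C(\overline\Omega)$). By the derivative reproducing property, $\partial^\beta u(x)=\la \partial_2^\beta k(\cdot,x),u\ra_{\H_k(\overline{\Omega})}$. Hence
\begin{align*}
|\partial^\beta u(x)-\partial^\beta u(z)|\le C\,\|\partial_2^\beta k(\cdot,x)-\partial_2^\beta k(\cdot,z)\|_{\H_k(\overline{\Omega})}.
\end{align*}
Expanding the squared norm gives $\partial_1^\beta\partial_2^\beta k(x,x)-2\partial_1^\beta\partial_2^\beta k(x,z)+\partial_1^\beta\partial_2^\beta k(z,z)$. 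Since $k\in C^{2\ell}(\overline\Omega\times\overline\Omega)$ and $\overline\Omega$ is compact, this expression is uniformly continuous and vanishes on the diagonal. So it tends to $0$ uniformly as $\|x-z\|\to0$, which makes $\{\partial^\beta u:u\in U\}$ uniformly equicontinuous.

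The same estimate at a single point, $|\partial^\beta u(x)|\le C\sqrt{\partial_1^\beta\partial_2^\beta k(x,x)}$, shows the family is uniformly bounded. Now write
\begin{align*}
a_\beta(x)\partial^\beta u(x)-a_\beta(z)\partial^\beta u(z)=a_\beta(x)\bigl(\partial^\beta u(x)-\partial^\beta u(z)\bigr)+\bigl(a_\beta(x)-a_\beta(z)\bigr)\partial^\beta u(z).
\end{align*}
Each coefficient $a_\beta$ is bounded and uniformly continuous on $\overline\Omega$, so both terms are controlled uniformly in $u\in U$. Summing over $\beta$, the family $\{Lu:u\in U\}$ is uniformly bounded by some $D_L$ and uniformly equicontinuous on $\overline\Omega$. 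For the boundary operators the same argument applies on $\partial\Omega$, which is compact.

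\textbf{Step 2: composition with $\overline P$ and $\overline B$.} Set $D:=\max_q D_{L_q^P}$. Then the vector $(L_1^Pu(x),\ldots,L_Q^Pu(x))$ lies in the compact cube $K:=[-D,D]^Q$ for all $u\in U$ and $x\in\overline\Omega$. The map $\overline P$ is continuous, hence uniformly continuous on $K$.

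Given $\varepsilon>0$, choose $\eta>0$ such that $|\overline P(a)-\overline P(b)|<\varepsilon$ whenever $a,b\in K$ and $\|a-b\|_\infty<\eta$. Step 1 then gives $\delta>0$ such that $|L_q^Pu(x)-L_q^Pu(z)|<\eta$ for all $q$, all $u\in U$, and all $\|x-z\|<\delta$. Together these yield $|P(u)(x)-P(u)(z)|<\varepsilon$ uniformly in $u\in U$. The argument for $B(u)$ is identical, using $\overline B$ and the operators $L_r^B$ on $\partial\Omega$.

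\textbf{Main obstacle.} The delicate point is Step 1. One needs a uniform modulus of continuity for the derivative representers, and this requires joint continuity of the mixed derivatives $\partial_1^\beta\partial_2^\beta k$ up to the boundary. One also needs the coefficients of the operators to be continuous on the closure, which should be stated as part of the interpretation of Assumption~\ref{ass:collocation-functionals}. Step 1 also supplies the uniform boundedness used in Step 2; without it, the uniform continuity of $\overline P$, which holds only on compact sets, could not be exploited.
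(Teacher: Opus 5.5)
Your proof is correct and follows essentially the same route as the paper: you bound the linear quantities $L_q^P u(x)$ via the derivative-reproducing property and Cauchy--Schwarz, which gives uniform boundedness and uniform equicontinuity on the compact set, and then use uniform continuity of $\overline P$ and $\overline B$ on the resulting compact range. The differences are minor. You verify the uniform continuity of $x\mapsto\partial_2^\beta k(\cdot,x)$ explicitly by expanding $\|\partial_2^\beta k(\cdot,x)-\partial_2^\beta k(\cdot,z)\|_{\H_k(\overline{\Omega})}^2$ in mixed kernel derivatives, which the paper only asserts, and you allow continuous variable coefficients, whereas the paper packages each operator into a single representer map $\Phi_i^P$.
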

\begin{proof}
For $u\in\H_k(\overline{\Omega})$, set $L^P(u)(x):=(L_1^P u(x),\ldots,L_Q^P u(x))$ for $x\in\overline{\Omega}$ and $L^B(u)(x):=(L_1^B u(x),\ldots,L_R^B u(x))$ for $x\in\partial\Omega$. We prove the assertion for $P$; the proof for $B$ is analogous. For $i\in[Q]$, write $L_i^P=\sum_{|\alpha|\leq \ell}s_i^\alpha\partial^\alpha$, with unused coefficients set equal to zero, and define $\Phi_i^P(x):=\sum_{|\alpha|\leq \ell}s_i^\alpha\partial_2^\alpha k(\cdot,x)$ for $x\in\overline{\Omega}$. By the derivative-reproducing property, $L_i^P u(x)=\langle u,\Phi_i^P(x)\rangle_{\H_k(\overline{\Omega})}$. Since $U$ is bounded, there exists $C>0$ such that $\|u\|_{\H_k(\overline{\Omega})}\leq C$ for all $u\in U$. Hence, for $x,\overline{x}\in\overline{\Omega}$ and $u\in U$,
\begin{align*}
    \|L^P(u)(x)-L^P(u)(\overline{x})\|_2^2
    &\leq
    C^2\sum_{i=1}^Q
    \|\Phi_i^P(x)-\Phi_i^P(\overline{x})\|_{\H_k(\overline{\Omega})}^2 .
\end{align*}
Since $k\in C^{2\ell}(\overline{\Omega}\times\overline{\Omega})$, each map $\Phi_i^P:\overline{\Omega}\to\H_k(\overline{\Omega})$ is continuous, hence uniformly continuous on the compact set $\overline{\Omega}$. It follows that $\{L^P(u):u\in U\}$ is uniformly equicontinuous on $\overline{\Omega}$. Moreover, for every $x\in\overline{\Omega}$ and $u\in U$,
\begin{align*}
    \|L^P(u)(x)\|_2^2
    &\leq
    C^2\sum_{i=1}^Q
    \|\Phi_i^P(x)\|_{\H_k(\overline{\Omega})}^2 .
\end{align*}
The right-hand side is bounded uniformly in $x$, since the functions $\Phi_i^P$ are continuous and $\overline{\Omega}$ is compact. Thus there exists $\overline{C}>0$ such that $\|L^P(u)(x)\|_2\leq\overline{C}$ for all $u\in U$ and $x\in\overline{\Omega}$. Hence all values $L^P(u)(x)$ lie in the compact ball $\overline{B}_{\overline{C}}(0)\subset\mathbb{R}^Q$. Since $\overline{P}$ is continuous, it is uniformly continuous on this ball. Combining this with the uniform equicontinuity of $\{L^P(u):u\in U\}$ shows that $\{P(u):u\in U\}$ is uniformly equicontinuous on $\overline{\Omega}$.

For the boundary operators, write $L_j^B=\sum_{|\alpha|\leq \ell}t_j^\alpha\partial^\alpha$ and define $\Phi_j^B(x):=\sum_{|\alpha|\leq \ell}t_j^\alpha\partial_2^\alpha k(\cdot,x)$ for $x\in\partial\Omega$. Repeating the preceding argument on the compact set $\partial\Omega$ gives uniform boundedness and uniform equicontinuity of $\{L^B(u):u\in U\}$, and the uniform continuity of $\overline{B}$ on the resulting compact range yields the uniform equicontinuity of $\{B(u):u\in U\}$ on $\partial\Omega$.
\end{proof}
\end{appendix}
\end{document}